\newcommand{\loglog}{\log \! \log}
\newcommand{\C}{\mathbb{C}}
\newcommand{\F}{\mathbb{F}}
\newcommand{\Z}{{\mathbb{Z}}}
\newcommand{\nodiv}{\nmid}
\newcommand{\lt}{\mathop{\text{\upshape lt}}}
\newcommand{\SL}{\mathop{\text{\upshape sl}}}
\newcommand{\softO}{\mathcal{O}^{\sim}}
\newcommand{\vars}{n}
\renewcommand{\vars}{\text{\textcolor{blue}{$n$}}}
\renewcommand{\vars}{r}
\newcommand{\dg}{d}
\renewcommand{\dg}{\text{\textcolor{blue}{$d$}}}
\renewcommand{\dg}{n}
\title{Counting decomposable multivariate polynomials}
\author{Joachim von~zur~Gathen\\
  B-IT\\
  Universit\"at Bonn\\
  D-53113 Bonn\\
  \email{gathen@bit.uni-bonn.de}\\
  \homepage{http://cosec.bit.uni-bonn.de/}
}
\begin{abstract}
  A polynomial $f$ (multivariate over a field) is \emph{decomposable}
  if $f=g \circ h$ with $g$ univariate of degree at least $2$. We
  determine the dimension (over an algebraically closed field) of the
  set of decomposables, and an approximation to their number over a
  finite field. The relative error in our approximations is
  exponentially decaying in the input size. 
\end{abstract}
\begin{document}

\section{Introduction}

It is intuitively clear that the decomposable polynomials form a small
minority among all polynomials (multivariate over a field). The goal
in this work is to give a precise quantitative version of this
intuition.  Interestingly, we find a special case for bivariate
polynomials where the intuition about the ``most general decomposable
polynomials'' is incorrect.

We use the methods from \cite{gat08-incl-gat07}, where the corresponding task was
solved for reducible, squareful, relatively irreducible, and singular
bivariate polynomials; further references are given in that
paper. \Citet*{gatvio09} extend those results to multivariate
polynomials and give further information such as exact formulas and
generating functions.

Our question has two facets: in the \emph{geometric} view, we want to
determine the dimension of the algebraic set of decomposable
polynomials, say over an algebraically closed field. The
\emph{combinatorial} task is to approximate the number of decomposables
over a finite field, together with a good relative error bound.  The
goal is to have a bound that is exponentially decreasing in the input
size. The choices we make in our calculations are guided by the goal
of such bounds in a form which is as simple and universal as possible.

As mentioned above, a special case occurs for bivariate polynomials.
Usually, the largest number of decompositions results from maximizing
the number of choices for the right component. But for some special
degrees---the squares of primes and numbers of RSA type---most
bivariate decompositions arise from having a large number of choices
for the left component. At three or more variables, all is uniform.

\cite{gie88} was the first to consider a variant of our counting
problem. He showed that the decomposable univariate polynomials form
an exponentially small fraction of all univariate polynomials. My
interest, dating back to the supervision of this thesis, was rekindled
by my study of similar counting problems 
\citep{gat08-incl-gat07},
and during a
visit to Pierre Dèbes' group at Lille, where I received a preliminary
version of \cite*{boddeb09}.

The companion paper \cite{gat08b} deals with decomposable univariate
polynomials.

\section{Decompositions}
\label{secDec}
We have a field $F$, a positive integer $\vars$, and the polynomial
ring $R = F[x_{1}, \ldots, x_{\vars}]$. We assume a degree-respecting term order on $R$,
so that in particular the \emph{leading term} $\lt(f)$ of an $f \in R$
is defined and $\deg \lt(f)= \deg f$. Throughout this paper, $\deg$
denotes the total degree.  If $f \neq 0$, the constant coefficient
$~lc(f) \in F^{\times} = F \smallsetminus \{0\}$ of $\lt(f)$ is the
\emph{leading coefficient} of $f$. Then $f$ is \emph{monic} if $~lc(f)
= 1$.
We call $f$ \emph{original} if its graph contains the origin, that is,
$f(0,\ldots,0) =0$.

The reader might think of the usual degree-lexicographic ordering,
where terms of higher degree come before those of lower degree, and
terms of the same degree are sorted lexicographically, with $x_{1} >
x_{2} > \cdots > x_{\vars}$. For example,
$$
f= -3x_{1}^{2}x_{3} - 2x_{2}^{3} + 4x_{4}x_{5}^{2} + 5x_{1}^{2} +
8x_{1}x_{2} + 5x_{6}^{2} -7
$$
is written in order, $~lc(f)=-3$ (provided that $-3 \neq 0$), and $f$
is not original (if $-7 \neq 0$). 

\begin{definition}
  \label{defComp}
  For $g\in F[t]$ and $h\in R$,
  $$
  f = g \circ h = g(h) \in R
  $$
  is their \emph{composition}.  If $\deg g \geq 2$ and $\deg h \geq
  1$, then $(g,h)$ is a \emph{decomposition} of $f$. A polynomial $f
  \in R$ is \emph{decomposable} if there exist such $g$ and $h$.
  Otherwise $f$ is \emph{indecomposable}. The decomposition $(g,h)$ is
  \emph{normal} if $h$ is monic and original. It is \emph{superlinear} if $\deg h
\geq 2$.
\end{definition}

There are other notions of decompositions. The present one is called
uni-multivariate in \cite{gatgut03-incl-gatgut99}. Another one is
studied in \cite{fauper08} for cryptanalytic purposes. In the context
of univariate polynomials, only superlinear decompositions are traditionally considered.

\begin{remark}
  \label{invariance1}
  Multiplication by a unit or addition of a constant does not change
  decomposability, since
  $$
  f = g \circ h \Longleftrightarrow a f+b = (a g+b) \circ h
  $$
  for all $f$, $g$, $h$ as above and $a,b \in F$ with $a\neq 0$.  In
  other words, the set of decomposable polynomials is invariant under
  this action of $F^{\times} \times F$ on $R$.

  Furthermore, any decomposition $(g,h)$ can be normalized by this
  action, by taking $a = ~lc (h)^{-1} \in F^{\times}$, $b=-a \cdot
  h(0,\ldots,0) \in F$, $g^{*} = g((t-b)a^{-1}) \in F[t]$, and $h^{*}
  = ah+b$.  Then $g\circ h = g^{*} \circ h^{*}$ and $(g^{*}, h^{*})$
  is normal.
\end{remark}

The following result is shown for $r \geq 2$ in \cite{boddeb09}.
It is trivially valid for $r=1$, where
\begin{equation}
  \label{trivialUni}
  f(x_{1})= f(t)\circ x_{1}
\end{equation}
for any $f \in F[x_{1}]$.

\begin{fact}
  \label{fact:uni}
  Any polynomial in $R$ has at most one normal decomposition with
  indecomposable right component.
\end{fact}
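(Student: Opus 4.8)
The plan is to prove the sharper statement that $f$ has at most one monic, original, indecomposable right component $h$; the left component is then automatically forced, since a non-constant $h$ is transcendental over $F$, so $g_1(h)=g_2(h)=f$ yields $(g_1-g_2)(h)=0$ and hence $g_1=g_2$. The one substantial ingredient I would import is that an indecomposable $h\in R$ has \emph{finite spectrum}: the set $\mathrm{Sp}(h)=\{\lambda\in\overline F:\ h-\lambda\text{ is reducible in }\overline F[x_1,\dots,x_r]\}$ is finite. For $r=1$ an indecomposable $h$ is linear, so $\mathrm{Sp}(h)=\varnothing$ and we recover \eqref{trivialUni}; for $r\ge 2$ this finiteness is the Bertini--Krull / Stein-type statement proved in \cite{boddeb09}.

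Granting this, suppose $f=g_1(h_1)=g_2(h_2)$ with both decompositions normal and both $h_i$ indecomposable, and set $e_i=\deg g_i\ge 2$. Over $\overline F$ write $g_i(T)-c=\mathrm{lc}(g_i)\prod_k(T-\mu^{(i)}_k)^{m^{(i)}_k}$ with the roots $\mu^{(i)}_k$ pairwise distinct, and substitute $T=h_i$:
\[
  f-c=\mathrm{lc}(g_i)\prod_k\bigl(h_i-\mu^{(i)}_k\bigr)^{m^{(i)}_k}\qquad(i=1,2).
\]
Choose $c\in\overline F$ outside the finite set $g_1(\mathrm{Sp}(h_1))\cup g_2(\mathrm{Sp}(h_2))$. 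Then no $\mu^{(i)}_k$ lies in $\mathrm{Sp}(h_i)$, so each $h_i-\mu^{(i)}_k$ is irreducible over $\overline F$; it is monic of degree $\deg h_i\ge 1$, and within one product the factors are pairwise non-associate. Hence each side of the display is literally the prime factorization of $f-c$ in the UFD $\overline F[x_1,\dots,x_r]$. Uniqueness of factorization gives $\mathrm{lc}(g_1)=\mathrm{lc}(g_2)$ together with an equality of the two multisets of monic prime factors; in particular $\deg h_1=\deg h_2$, and picking some $k_0$ there is an $l_0$ with $h_1-\mu^{(1)}_{k_0}=h_2-\mu^{(2)}_{l_0}$. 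Thus $h_1-h_2=\mu^{(1)}_{k_0}-\mu^{(2)}_{l_0}$ is a constant in $\overline F$; as $h_1-h_2\in F[x_1,\dots,x_r]$ vanishes at the origin (both $h_i$ are original), that constant is $0$, so $h_1=h_2$ and then $g_1=g_2$.

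The only real obstacle is the finiteness of $\mathrm{Sp}(h)$ for indecomposable $h$; everything else is bookkeeping in a UFD. In characteristic $0$ this finiteness is classical (it follows from imprimitivity of the geometric monodromy group of $h$, or from a Bertini-type irreducibility argument), but in positive characteristic it genuinely requires the analysis of \cite{boddeb09}, since one must exclude ``accidental'' reducible fibres not explained by a decomposition. A more self-contained but heavier alternative would avoid the spectrum entirely: work in $K=F(x_1,\dots,x_r)$, note $[F(h_i):F(f)]=\deg g_i$ (because $g_i(T)-f$ is linear in $f$, hence irreducible over $F(f)$), use that intermediate fields between $F(f)$ and $F(h_i)$ correspond to polynomial decompositions of $h_i$ (so indecomposability of $h_i$ means $F(h_i)/F(f)$ has no proper intermediate field), and deduce $F(h_1)=F(h_2)$; the delicate point there is ruling out $F(h_1)\cap F(h_2)=F(f)$, which needs a genuine degree/compositum argument inside $K$ rather than formal field theory.
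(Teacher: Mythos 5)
The paper does not prove \ref{fact:uni} itself: it quotes the $r\ge 2$ case from \cite{boddeb09} and dispatches $r=1$ via \ref{trivialUni}, so there is no in-paper argument to compare against. Your proposal is a sound reconstruction of how the uniqueness follows from the finite-spectrum (Stein / Bertini--Krull) theorem, which is itself a central result of that same reference. The unique-factorization bookkeeping after the import is correct: choosing $c$ outside the finite set $g_1(\mathrm{Sp}(h_1))\cup g_2(\mathrm{Sp}(h_2))$ forces each $h_i-\mu^{(i)}_k$ to be a monic irreducible of $\overline F[x_1,\ldots,x_r]$, so the two displayed products are the monic prime factorizations of $f-c$; matching one prime from each side gives $h_1-h_2\in\overline F$, originality kills that constant, and the left component is then forced by transcendence of $h_1$. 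One point worth making explicit: the imported statement ``$h$ indecomposable in $R=F[x_1,\ldots,x_r]$ implies $\mathrm{Sp}(h)$ finite'' silently contains the nontrivial claim that $F$-indecomposability already controls the geometry over $\overline F$ (in particular $h$ does not acquire a composition after base change), and in positive characteristic one must also exclude the exceptional families with infinite spectrum; that is precisely what \cite{boddeb09} establishes, so you are right that the import carries essentially all the weight and what remains is bookkeeping. Your alternative field-theoretic sketch, via intermediate fields of $F(h_i)/F(f)$ in $F(x_1,\ldots,x_r)$, is the mechanism underlying \cite{gat90c} in the tame case, and as you note the delicate step there is upgrading $F(h_1)\cap F(h_2)\neq F(f)$ to $F(h_1)=F(h_2)$.
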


When the characteristic does not divide the degree of $f$, then this
also follows from the algorithmic approach in \cite{gat90c}, and also
holds for superlinear decompositions of univariate polynomials. If we
also allowed trivial decompositions $f = g \circ h$ with $\deg g = 1$,
then every polynomial would have exactly one normal decomposition with
indecomposable right component.


We fix some notation for the remainder of this paper. For $\vars \geq
1$ and $\dg \geq 0$, we write
$$
P_{\vars,\dg}= \{f \in F [x_{1}, \ldots, x_{\vars}] \colon \deg f \leq
\dg\}
$$
for the vector space of polynomials of degree at most $\dg$, of
dimension
$$
\dim P_{\vars,\dg} = b_{\vars,\dg}= \binom{\vars+\dg}{\vars}.
$$

Furthermore, we consider the subsets
\begin{align*}
  P_{\vars,\dg}^{=} &  = \{f \in P_{\vars,\dg} \colon \deg f = \dg\}, \\
  P^{0}_{\vars,\dg} & = \{f \in P_{\vars,\dg}^{=} \colon \text{$f$
    monic and original\}}.
\end{align*}

Over an infinite field, the first of these is the Zariski-open subset $
P_{\vars,\dg} \smallsetminus P_{\vars,\dg-1}$ of $P_{\vars,\dg}$ and
irreducible, taking $P_{\vars,-1} = \{0\}$. The second one is obtained
by further imposing one equation and working modulo multiplication by
units, so that
\begin{align*}
  \dim P_{\vars,\dg}^{=} &= b_{\vars,\dg},\\
  \dim P^{0}_{\vars,\dg}& =b_{\vars,\dg}{-2},
\end{align*}
with $P^{0}_{r,0}= \varnothing$. For any divisor $e$ of $\dg$, we have the normal composition map
$$
\map[\gamma_{\vars,\dg,e}] {P_{1,e}^{=} \times P^{0}_{\vars,\dg/e}}
{P_{\vars,\dg}^{=}} {(g,h)} {g \circ h,}
$$
corresponding to \ref{defComp}.  (Here $P_{1,e}^{=}$ consists of
polynomials in $F[t]$ rather than in $F[x_{1}]$.)  The set
$D_{\vars,\dg}$ of all decomposable polynomials in $P_{\vars,\dg}^{=}$
satisfies
\begin{equation}\label{gam}
  D_{\vars,\dg} = \bigcup_{1 < e\mid \dg} ~im \gamma_{\vars,\dg,e}.
\end{equation}

In particular, $D_{\vars,1} = \varnothing$ for all $\vars \geq 1$.
Over an algebraically closed field, the dimension of $D_{\vars,\dg}$
is taken to be the maximal dimension of its irreducible components.
We also call
$$
I_{\vars,\dg}= P_{\vars,\dg}^{=} \smallsetminus D_{\vars,\dg}
$$
the set of indecomposable polynomials.  Thus $I_{\vars,1} =
P_{\vars,1}^{=}$ for $\vars\geq 1$.

\begin{remark}
  \label{invariant}
  By \ref{invariance1}, over an algebraically closed field, the
  codimension of $D_{\vars,\dg}$ in $P_{\vars,\dg}^{=}$ equals that of
  $D_{\vars,\dg} \cap P_{\vars,\dg}^{0}$ in $P_{\vars,\dg}^{0}$.  The
  same holds for $I_{\vars,\dg}$, and over a finite field for the
  corresponding fractions.
\end{remark}

In order to have a nontrivial concept also in the univariate case,
where \ref{trivialUni} holds, we introduced in \ref{defComp} the
notion of {superlinear decompositions} $f = g \circ h$ where $\deg h
\geq 2$. The set of all these is
\begin{equation}\label{substack}
  D^{~sl}_{\vars,\dg}= \bigcup_{\substack{e\mid \dg\\1<e<\dg}}~im\gamma_{\vars,\dg,e}.
\end{equation}
In particular, $D^{~sl}_{\vars,\dg} = \varnothing$ if $\dg$ is
prime. We also let $I^{~sl}_{\vars, \dg}=P^{=}_{\vars, \dg} \smallsetminus
D^{~sl}_{\vars,\dg}.$
In the present paper, we investigate this notion only for two or more
variables. The univariate case is treated in \cite{gat08c}.

\section{Dimension of decomposables}\label{sec:dimofdec}

In this section, we determine the dimension of the set of decomposable
polynomials over an algebraically closed field. This forms the basis
for the counting result in the next section.

Throughout the paper, $l$ denotes the smallest prime factor of $\dg \geq
2$.  In the following, we have to single out the following special
case:
\begin{equation}
  \label{special}
  \text{$\vars = 2, \dg / l$ is prime and $\dg/l \leq 2l-5$}.
\end{equation}

The smallest examples are $\dg=l^{2}$ with $l \geq5$, $\dg=11 \cdot
13$, and $\dg=11 \cdot 17$.  In particular, $l$ and $\dg/l$ are always
at least $5$.

\begin{theorem}
  \label{thm:Dim}
  Let $F$ be an algebraically closed field, $\vars \geq 1$, $ \dg \geq
  2$, let $l$ be the smallest prime divisor of $\dg$, and
  \begin{align}\label{defM}
    m =
    \begin{cases}
      \dg & \text{if \ref{special} holds
        or}~ \vars=1,\\
      l  & \text{otherwise}.\\
    \end{cases}
  \end{align}
  Then the following hold.
  \begin{enumerate}
  \item\label{thm:subvar-1} $D_{\vars,\dg}$ has dimension
    $$
    \dim D_{\vars,\dg} = \binom{\vars+\dg/m}{\vars}+ m-1 .
    $$
  \item\label{thm:subvar-2} If $r\geq2$, then $I_{\vars, \dg}$ is a
    dense open subset of $P^{=}_{\vars, \dg}$, of dimension
    $\binom{\vars+\dg}{\vars}.$
  \item\label{thm:subvar-3} We assume that $r \geq2$. Then $
    D_{\vars,\dg}^{~sl} = \varnothing  $ if $\dg$ is prime, and otherwise
    \begin{align*}
      \dim D_{\vars,\dg}^{~sl}& = \binom {\vars+\dg/l}{\vars}+l-1.
    \end{align*}
  \end{enumerate}
\end{theorem}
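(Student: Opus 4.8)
Let me think about how to prove this theorem about dimensions of decomposable polynomials.\textbf{Overview of the plan.} The theorem has three parts, all computing the dimension of certain images of composition maps. The unifying tool is the map $\gamma_{\vars,\dg,e}\colon P_{1,e}^{=}\times P^{0}_{\vars,\dg/e}\to P^{=}_{\vars,\dg}$ from \eqref{gam}, together with the uniqueness statement in \ref{fact:uni}, which controls the fibers. The strategy is: (i) compute $\dim \overline{~im\gamma_{\vars,\dg,e}}$ for each divisor $e>1$ of $\dg$ by a fiber-dimension argument; (ii) take the maximum over $e$ to get $\dim D_{\vars,\dg}$, which explains the definition of $m$ in \eqref{defM}; (iii) for part \ref{thm:subvar-2}, observe that this maximum is strictly less than $\binom{\vars+\dg}{\vars}=\dim P^{=}_{\vars,\dg}$ when $\vars\geq 2$, so the complement $I_{\vars,\dg}$ is dense and open of full dimension; (iv) part \ref{thm:subvar-3} is the same computation restricted to the union in \eqref{substack}, i.e.\ dropping the term $e=\dg$, so the maximizing divisor becomes $l$ (the case \ref{special} no longer intervenes since that case came precisely from $e=\dg$).

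\textbf{The fiber-dimension computation.} For a fixed divisor $e$ of $\dg$ with $1<e\le \dg$, write $e'=\dg/e$. The source has dimension $\dim P_{1,e}^{=}+\dim P^{0}_{\vars,e'} = (e+1) + (b_{\vars,e'}-2) = e-1+b_{\vars,e'}$, where $b_{\vars,e'}=\binom{\vars+e'}{\vars}$ (and for $e=\dg$, $e'=1$, so $P^{0}_{\vars,1}=\varnothing$ and one treats $g\circ x_1$ separately, as in \eqref{trivialUni}, giving source dimension $e+1=\dg+1$, hence $\dim ~im = \dg$ — this is the ``$\vars=1$'' clause and also the source of the special case). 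To pass from the source to the image I need the generic fiber dimension. By Remark \ref{invariance1} and \ref{fact:uni}, once we fix a generic $f$ in the image coming from an \emph{indecomposable} right component $h$, the normal decomposition is unique; but $h$ itself need not be indecomposable, so I must argue that the generic $h\in P^{0}_{\vars,e'}$ is indecomposable when $e'>1$ (immediate from part \ref{thm:subvar-2} applied inductively, or directly since indecomposables are dense), and that for $e'=1$ the only freedom is the trivial $f(t)\circ x_1$. Hence $\gamma_{\vars,\dg,e}$ is generically injective on the normalized locus, giving
\begin{equation}\label{dimim}
  \dim \overline{~im\gamma_{\vars,\dg,e}} = e-1+b_{\vars,\dg/e}
  \quad(e'>1),\qquad \dim\overline{~im\gamma_{\vars,\dg,\dg}}=\dg.
\end{equation}

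\textbf{Maximizing over $e$ and identifying the special case.} Now $\dim D_{\vars,\dg}=\max_{1<e\mid\dg}\dim\overline{~im\gamma_{\vars,\dg,e}}$. The function $e\mapsto e-1+\binom{\vars+\dg/e}{\vars}$ trades off a linear gain in $e$ against a drop in the binomial term. For $\vars\ge 3$ the binomial term dominates so strongly that the smallest admissible $e$, namely $e=l$, always wins over every other proper divisor, and also beats the value $\dg$ from $e=\dg$; this gives $m=l$. For $\vars=2$ the binomial is only quadratic in $\dg/e$, so there is genuine competition: comparing $e=l$ (value $l-1+\binom{2+\dg/l}{2}$) against $e=\dg$ (value $\dg$) and against other divisors, one finds that $e=\dg$ wins exactly when $\dg/l$ is prime (so that $l$ and $\dg$ are the only divisors $>1$ to compare) and $\dg/l\le 2l-5$ — this is precisely condition \eqref{special}, and then $m=\dg$. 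For $\vars=1$ only $e=\dg$ is available in the relevant sense, giving $m=\dg$ as well. Assembling \eqref{dimim} at $e=m$ yields part \ref{thm:subvar-1}. Part \ref{thm:subvar-2}: for $\vars\ge 2$ one checks $\binom{\vars+\dg/m}{\vars}+m-1<\binom{\vars+\dg}{\vars}$ in all cases (the binomial strictly grows as the second parameter goes from $\dg/m$ to $\dg$, dwarfing the additive $m-1\le\dg-1$), so $D_{\vars,\dg}$ is a proper closed subvariety and $I_{\vars,\dg}$ is dense, open, of dimension $\binom{\vars+\dg}{\vars}$. Part \ref{thm:subvar-3}: repeat with the index set $\{e: e\mid\dg,\ 1<e<\dg\}$ from \eqref{substack}; the value $\dg$ from $e=\dg$ is now excluded, the special case evaporates, and the maximum is attained at $e=l$ (valid since $\dg$ is composite), giving $\binom{\vars+\dg/l}{\vars}+l-1$, while primality of $\dg$ makes the index set empty and $D^{~sl}_{\vars,\dg}=\varnothing$.

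\textbf{Main obstacle.} The routine parts are the fiber argument and the two easy monotonicity inequalities. The genuinely delicate step is the case analysis for $\vars=2$: pinning down \emph{exactly} when $e=\dg$ beats $e=l$ and all intermediate divisors, and verifying that the answer is captured by the clean inequality $\dg/l\le 2l-5$ in \eqref{special} (including checking that when $\dg/l$ is composite some intermediate divisor always beats $e=\dg$, so the special case really does require $\dg/l$ prime). This requires carefully comparing $\dg=e\,e'$ with $e-1+\binom{e'+2}{2}=e+\tfrac{e'(e'+1)}2$ over the divisor lattice of $\dg$, which is where the bulk of the honest work lies.
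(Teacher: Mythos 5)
Your plan is the same as the paper's: bound $\dim \overline{~im\gamma_{\vars,\dg,e}}$ above by the source dimension, bound it below via the generic injectivity furnished by \ref{fact:uni} (using indecomposable right components), take the maximum over $e>1$ dividing $\dg$, identify the special case for $\vars=2$, and then note for \short\ref{thm:subvar-2} that the max is still smaller than $\dim P^=_{\vars,\dg}$. However, there is a concrete error in your setup. You write ``for $e=\dg$, $e'=1$, so $P^{0}_{\vars,1}=\varnothing$,'' and conclude $\dim\overline{~im\gamma_{\vars,\dg,\dg}}=\dg$. In fact $P^{0}_{\vars,1}$ is not empty: it is the set of monic original \emph{degree-one} polynomials $h=x_1+c_2x_2+\cdots+c_\vars x_\vars$ and has dimension $\vars-1$; the paper's remark is that $P^{0}_{\vars,0}=\varnothing$ (degree zero). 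Plugging $e=\dg$ into the same formula $b_{\vars,\dg/e}+e-1$ you derived gives $b_{\vars,1}+\dg-1=\vars+\dg$, not $\dg$. Your special-case treatment of $e'=1$ and the entry $\dim\overline{~im\gamma_{\vars,\dg,\dg}}=\dg$ in your display \eqref{dimim} are therefore wrong and also inconsistent with your own general formula.

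This is not a harmless slip, because the missing $\vars$ is exactly what makes the threshold in \ref{special} come out as $\dg/l\le 2l-5$. For $\vars=2$ you would be comparing $u(l)$ against $\dg$ rather than against the correct $\dg+2$; the inequality $2l(a-1)>(a+4)(a-1)$ (giving $a\le 2l-5$) then degrades to $2l(a-1)>a(a+3)$, which changes the boundary. A concrete counterexample: $\dg=25$, $l=a=5$. The paper computes $u_{2,25}(25)=27>25=u_{2,25}(5)$, so \ref{special} holds and $m=\dg$; with your undercounted value $u(25)=25$ the two are tied. Beyond this, the hard quantitative steps — the convexity of $u_{\vars,\dg}$ via its second derivative, the argument at $s_0=\sqrt{\dg}$ with the auxiliary function $v_\vars$ for $\vars\ge 3$, and the exact identity \ref{ul-ud} together with the chain of equivalences \ref{eq:qf} for $\vars=2$ — are only gestured at, not carried out, as you yourself acknowledge; these are the part of the proof that makes the theorem precise rather than heuristic.
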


\begin{proof}
  The claim \short\ref{thm:subvar-1} for $\vars=1$ follows from
  \ref{trivialUni}, and we assume $\vars\geq 2$ in the remainder of
  the proof.

\short\ref{thm:subvar-1} Each $\gamma_{\vars,\dg,e}$ is a
  polynomial map, and we have
  \begin{equation}\label{dim}
    \dim ~im \gamma_{\vars,\dg,e} \leq \dim P^{=}_{1,e} + \dim P^{0}_{\vars,\dg/e}
    = b_{\vars,\dg/e}+e - 1.
  \end{equation}
  We let $E=\{e \in \mathbb{N}\colon 1 < e\mid\dg \}$ be the index set in
  \ref{gam}.  When $\dg$ is prime, then $e = \dg= l$ is the only
  element of $E$, and the upper bound $\dim D_{\vars,\dg} \leq
  \vars+\dg$ in \short\ref{thm:subvar-1} follows.  We may now assume
  that $\dg$ is composite.  We consider the right hand side in
  \ref{dim} as the function
  \begin{equation}\label{real}
    u_{\vars,\dg}(e) = b_{\vars,\dg/e}+e - 1
  \end{equation}
  of a real variable $e$ on the interval $[1,\dg]$. See \ref{fig:u_example} for
  an example.
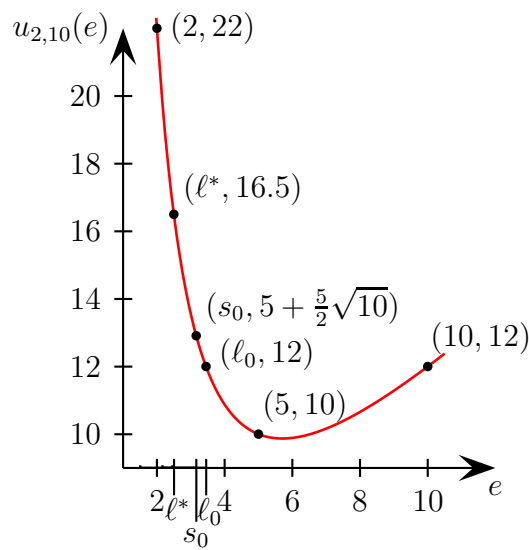
\begin{figure}
\begin{center}
\psset{unit=0.45cm}
\begin{pspicture*}(-2,6.65)(13.5,23.5)
\psaxes[showorigin=false, Dx=2, Ox=0, labels=x, ticks=x,
axesstyle=none]{->}(0,9)(11,10)
\psaxes[showorigin=false, ticks=x, labels=none, tickstyle=bottom,
ticksize=10pt]{}(1.5,9)(2.5,9)
\psaxes[showorigin=false, ticks=x, labels=none, tickstyle=top]{}(1.5,9)(2.5,9)
\uput[270](2.6,8.5){$l^{*}$}
\psaxes[showorigin=false, ticks=x, labels=none, tickstyle=bottom,
ticksize=20pt]{}(2.16,9)(3.16,9)
\psaxes[showorigin=false, ticks=x, labels=none, tickstyle=top]{}(2.16,9)(3.16,9)
\uput[270](3.16,7.6){$s_{0}$}
\psaxes[showorigin=false, ticks=x, labels=none, tickstyle=bootom,
ticksize=10pt]{}(2.45,9)(3.45,9)
\psaxes[showorigin=false, ticks=x, labels=none, tickstyle=top]{}(2.45,9)(3.45,9)
\uput[270](3.55,8.5){$l_{0}$}
\psline[arrowsize=10pt]{->}(1,9)(12,9)
\uput[270](12,9){$e$}   
\psaxes[showorigin=false, Dy=2, Oy=8, labels=y, ticks=y, axesstyle=none]{->}(1,8)(2,21)
\psline[arrowsize=10pt]{->}(1,9)(1,22)
\uput[180](1,22){$u_{2,10}(e)$}   
\psplot[linecolor=red, plotstyle=curve, linewidth=1pt]
{1.98}{10.5}{2 10 x div add 1 10 x div add mul 2 div x add 1 sub}
\psdot(2,22)
\uput[0](2,22){$(2,22)$}
\psdot(5,10)
\uput[50](5,10){$(5,10)$}
\psdot(10,12)
\uput[50](10,12){$(10,12)$}
\psdot(3.16,12.91)
\uput[45](3.16,12.91){$(s_{0},5+\frac{5}{2}\sqrt{10})$}
\psdot(3.45,12)
\uput[22.5](3.45,12){$(l_{0},12)$}
\psdot(2.5,16.5)
\uput[45](2.5,16.5){$(l^{*}, 16.5)$}
\end{pspicture*}
\end{center}
 \caption{An example of $u_{r,n}$, for $r=2$ and $n=10$, with $l=2$,
   $l^{*}=\frac{5}{2}$, $s_{0}=\sqrt{10}\approx 3.16$, and $l_{0}=1+\sqrt{6}\approx 3.45$.}
 \label{fig:u_example}
\end{figure}
We claim that
  \begin{equation}
    \label{maxU}
    u_{\vars,\dg}(m)
    = \max_{e\in E} u_{\vars,\dg} (e) .
  \end{equation}
  The upper bound in \short\ref{thm:subvar-1} follows from this.  The
  second derivative
  \begin{align*}
    \frac{\partial^{2}u_{\vars,\dg}}{\partial e^{2}}(e)=
    \frac{\dg}{e^{3}\cdot \vars!} \sum_{1 \leq i \leq \vars
    }\biggl(\frac{\dg}{e} \sum_{\substack{1 \leq j \leq \vars \\ j
        \neq i}} \prod_{\substack{1 \leq k \leq \vars \\ k \neq
        i,j}}(k+ \frac{\dg}{e})+2 \prod_{\substack{1 \leq j \leq
        \vars\\j \neq i}}(j+\frac{\dg}{e})\biggr)
  \end{align*}
  is positive on $[1,\dg]$, so that $u_{\vars,\dg}$ is convex. In
  particular, $u_{\vars,\dg}$ takes its maximum on the interval
  $[l,\dg]$ at one of the two endpoints.  

  For \ref{maxU}, we start with the case $\vars \geq 3$ and claim that
  $u_{\vars,\dg}(l) \geq u_{\vars,\dg}(\dg)$.
  Setting $s_{0}= \sqrt{\dg}$, we have
  $$
  u_{\vars,\dg}(s_{0})- u_{\vars,\dg}(\dg) =
  \binom{\vars+s_{0}}{\vars}+s_{0}-1-(\vars+s^{2}_{0}).
  $$ 
  Now we replace $s_{0}$ by a real variable $s$, and set
  \begin{align*}
    v_{\vars}(s)& = \binom{\vars+s}{\vars}+s-1-(\vars+s^{2}).
  \end{align*}
  Then
  \begin{equation}\label{pos}
    v_{\vars}(2)= (\vars^{2}+\vars-4)/2>0,
  \end{equation}
   since $\vars \geq 2$. Furthermore, we have
  $$
  \frac{\partial v_{\vars}}{\partial s}(s)= \frac{1}{\vars!} \sum_{1
    \leq i \leq \vars} \prod_{\substack{1 \leq j \leq \vars \\ j \neq
      i}} (j+s) +1 -2s.
  $$
  Expanding the product, we find that the coefficient in the sum of
  the linear term in $s$ equals
  \begin{align*}
    \sum_{1 \leq i \leq \vars} \sum_{\substack{1 \leq j \leq \vars \\
        j \neq i}} \prod_{\substack{1 \leq k \leq \vars \\ k \neq
        i,j}}k = \vars!  \sum_{\substack{1 \leq i,j \leq \vars \\j
        \neq i}} \frac{1}{i \cdot j} \geq \vars! \cdot 2 \cdot
    (\frac{1}{1 \cdot 2} + \frac{1}{1 \cdot 3} + \frac{1}{2 \cdot 3})
    = 2 \cdot \vars!,
  \end{align*}
  since $\vars \geq 3$.  Thus
  $$
  \frac{\partial v_{\vars}}{\partial s}(s) \geq 0,
  $$
  and together with \ref{pos} this implies $v_{\vars}(s) >0$ for all
  $s \geq 2$. Since $\dg$ is composite, we have $2 \leq l \leq
  \sqrt{\dg} = s_{0}< \dg$, and from the above we have
  \begin{align*}\label{compos}
   u_{r,\dg}(l)\geq u_{\vars,\dg}(s_{0}) \geq u_{\vars,\dg}(\dg). 
  \end{align*}
  Since $m=l$, this shows the claim \ref{maxU} and the upper bound in
  \short\ref{thm:subvar-1}.

  For the case $\vars=2$, we observe that
  \begin{equation}
    \label{ul-ud}
    u_{2,\dg}(l)-u_{2,\dg}(\dg)= \frac {(\dg-l)(\dg+4l-2l^{2})} {2l^{2}}
  \end{equation}
  is nonnegative if and only if $ l \leq l_{0}$, where $l_{0} = 1 +
  \frac{1}{2} \sqrt{2\dg+4}$ is the positive root of the quadratic
  factor. Furthermore, we note that
  \begin{equation}\label{eq:qf}
  u_{2,\dg}(\dg)> u_{2, \dg}(l) \Longleftrightarrow l > l_{0}
  \Longleftrightarrow \dg/l < 2l-4 \Longleftrightarrow \dg/l \leq
  2l-5,
  \end{equation}
  $$
  l_{0}^{2}= \dg/2+ \sqrt{2\dg+4} +2 > \dg/2.
  $$

  If the conditions in \ref{eq:qf} hold, there is at most one other
  prime factor of $\dg$ besides $l$, so that $\dg/l$ is
  prime and \ref{special} holds. \ref{maxU} follows in this case, and also
  otherwise because of the equivalences in \ref{eq:qf}.

  We have now shown one inequality in \short\ref{thm:subvar-1}, namely
  that $\dim D_{\vars,\dg} \leq u_{\vars,\dg}(m)$. For
  \short\ref{thm:subvar-2}, we claim that $u_{\vars,\dg}(m) <
  u_{\vars,\dg}(1) = \dim P_{\vars,\dg}^{=}$.  Since $1 < m \leq \dg$
  and $u_{\vars,\dg}$ is convex, it is sufficient to show that
  \begin{align*}
    \vars+\dg = u_{\vars,\dg}(\dg) < u_{\vars,\dg}(1) =
    \binom{\vars+\dg} {\vars}.
  \end{align*}
  The inequality is equivalent to
  $$
  \vars! < (\vars+\dg-1)^{\underline {\vars -1}},
  $$
  where $a^{\underline \vars} = a \cdot (a-1) \cdots (a-\vars+1)$ is
  the falling factorial (or Pochhammer symbol). This is valid for
  $\dg=2$ since $2 < \vars +1$, and the right hand side is monotonically
  increasing in $\dg$, so that the claim is proven.

  It follows that $D_{\vars,\dg}$ is contained in a proper closed
  subset of $P_{\vars,\dg}^{=}$, and there is a dense open subset
  consisting of indecomposable polynomials, which is
  \short\ref{thm:subvar-2}. This fact also holds in each
  $P_{\vars,\dg/e}^{=}$, and in $P_{\vars,\dg/e}^{0}$ by
  \ref{invariant}.  From the uniqueness of normal decompositions with
  indecomposable right factor (\ref{fact:uni}) we conclude that each
  fiber of the restriction of $\gamma_{\vars,\dg,e}$ to $P_{1,e}^{=}
  \times I_{r, \dg/e}^{0}$ consists of a single point.  Thus equality holds in
  \ref{dim}, and \short\ref{thm:subvar-1} is also proven.

  \short\ref{thm:subvar-3} For superlinear compositions, we have $
  D_{\vars,\dg}^{~sl} = \varnothing$ if $\dg$ is prime, and now may
  assume $\dg$ to be composite. The maximal value allowed for $e$ in
  \ref{substack} is $\dg/l$.  Thus \short\ref{thm:subvar-3} follows
  from \short\ref{thm:subvar-1} when $m<\dg$.  For $\vars=2$,
  \begin{equation}\label{suli}
    u_{2,\dg}(l)-u_{2,\dg}(\dg/l)=\frac{(\dg-l^{2})(\dg+l^{2}+l)}{2l^{2}}
  \end{equation}
  is always nonnegative, so that
  \begin{align*}
    \dim D^{~sl}_{2,\dg}= \dim ~im \gamma_{2,\dg,l}=u_{2,\dg}(l).
  \end{align*}
  Together with the uniqueness of \ref{fact:uni}, this proves
  \short\ref{thm:subvar-3} also for $\vars=2$.
\end{proof}

\section{Counting decomposables over finite fields}\label{sec:cdoff}

The goal in this section is to approximate the number of multivariate
decomposables over a finite field, with a good relative error bound.

Over a finite field $F = \mathbb{F}_{q}$ with $q$ elements, we have
\begin{align*}
  \# P_{\vars,\dg}^{=} & = q^{b_{\vars,\dg}} -q^{b_{\vars,\dg-1}} =
  q^{b_{\vars,\dg}}
  (1-q^{-b_{\vars-1,\dg}}),\\
  \#P_{\vars,\dg}^{0}& = \frac{\#P_{\vars,\dg}^{=}}{q \cdot (q-1)} =
  q^{b_{\vars,\dg}-2}\frac{1-q^{-b_{\vars-1,\dg}}}{1-q^{-1}}.
\end{align*}

The proof of the following estimate of $\#D_{\vars,\dg}$ involves
several case distinctions which are reflected in the somewhat
complicated statement of the theorem. A simplified version is
presented in \ref{cor:triv} below. 

\begin{theorem}
  \label{thm:Count}
  Let $F = \F_{q}$ be a finite field with $q$ elements, $r\geq 2$, $l$
  the smallest prime divisor of $n \geq 2$, and $m$ as in \ref{defM}. We set
  \begin{align}\label{eq:pm}
    \displaystyle
    \alpha_{\vars,\dg} ~ & ~ =
    q^{\binom{\vars+\dg/m}{\vars}+m-1}(1-q^{-\binom{\vars-1+\dg/m}{\vars-1}}),
    \\
    c_{\vars,\dg,1} ~&~ =  l-3,\nonumber \\
    \displaystyle
    c_{\vars,\dg,2}~&~ =  l-2, \nonumber\\
    c_{\vars,\dg,3}~&~ = \binom{\vars+1}{2}-2, \nonumber\\
    c_{\vars,\dg,4} ~&~= \binom{\vars-1+\dg/l}{\vars -1}-1,\nonumber
  \end{align} 
 
  \begin{equation}
    \label{defBeta}
    \beta_{\vars,\dg} = 
    \begin{cases}
      0 & \textrm{if }  \dg \textrm{ is prime}, \\
      \displaystyle \frac {2q^{-c_{\vars,\dg,1}} (1-q^{-\dg/l-1})}
      {1-q^{-2}} &
      \textrm{if \ref{special} holds}, \\
      2q^{-c_{\vars,\dg,2}} &
      \text{if $\vars=2$ and $\dg/l=2l-3$ is prime},  \\
      q^{-c_{\vars,\dg,3}} & \text{if $\dg =4$},\\
      \displaystyle \frac{2q^{-c_{\vars,\dg,4}}} {1-q^{-1}} &
      \textrm{otherwise}.
    \end{cases}
  \end{equation}
  Then the following hold.
  \begin{enumerate}
  \item
    \label{thm:Count-1}
    $$
    \left|{\#D_{\vars,\dg}}- \alpha_{\vars,\dg}\right| \leq
    \alpha_{\vars,\dg}\cdot \beta_{\vars,\dg}.
    $$
  \item
    \label{thm:Count-2}
    $$
    {\#I_{\vars,\dg}} \geq \#P^{=}_{\vars,\dg} -2 \alpha_{\vars,\dg}.
    $$
  \item We set
    \label{thm:Count-3}
    \begin{align*}
      \alpha_{\vars,\dg}^{~sl} &=
      \begin{cases}
        0 & \text{if $\dg$ is prime},\\
        q^{\binom{2+\dg/l}{2}+l-1}(1-q^{-\dg/l-1})
        & \text{if \ref{special} holds}, \\
        \alpha_{\vars,\dg} & \text{otherwise},
      \end{cases}
      \\
      \beta_{\vars,\dg}^{~sl} &=
      \begin{cases}
        q^{-(\dg+l^{2}+l)(\dg-l^{2})/2l^{2}} &
        \textrm{if \ref{special} holds and } n > l^{2},\\
        q^{-(\dg+l-2)/2} & \text{if \ref{special} holds and } \dg = l^{2},\\
        \beta_{\vars,\dg} & \text{otherwise}.
      \end{cases}
    \end{align*}
    Then
    \begin{equation}
      \label{thm:Count-3.1}
      \left|{\#D_{\vars,\dg}^{~sl}}- \alpha_{\vars,\dg}^{~sl}\right|
      \leq \alpha_{\vars,\dg}^{~sl} \cdot \beta_{\vars,\dg}^{~sl}.
    \end{equation}
  \item
    \label{thm:Count-4}
    $\#I^{~sl}_{\vars,\dg} \geq \# P^{=}_{\vars,\dg}
    -2\alpha^{~sl}_{\vars,\dg}$.
  \end{enumerate}
\end{theorem}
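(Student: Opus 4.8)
The plan is to count $\#D_{\vars,\dg}$ through the union $D_{\vars,\dg} = \bigcup_{1<e\mid\dg}~im\gamma_{\vars,\dg,e}$ of \ref{gam}, estimating each piece by the cardinality of its domain $P^{=}_{1,e}\times P^{0}_{\vars,\dg/e}$, which is $q^{e}(q-1)\cdot\#P^{0}_{\vars,\dg/e}=q^{u_{\vars,\dg}(e)}(1-q^{-b_{\vars-1,\dg/e}})$ with $u_{\vars,\dg}$ as in the proof of \ref{thm:Dim}; for the divisor $e=m$ of \ref{defM} this equals exactly $\alpha_{\vars,\dg}$. First I would observe that over the sublocus $P^{=}_{1,e}\times I^{0}_{\vars,\dg/e}$ on which the right component is indecomposable (a degree-$1$ right component being vacuously indecomposable), the map $\gamma_{\vars,\dg,e}$ is injective --- this is precisely the uniqueness assertion \ref{fact:uni} --- so that $\#P^{=}_{1,e}(\#P^{0}_{\vars,\dg/e}-\#D^{0}_{\vars,\dg/e})\le\#~im\gamma_{\vars,\dg,e}\le\#P^{=}_{1,e}\cdot\#P^{0}_{\vars,\dg/e}$.

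Next I would feed in the convexity of $u_{\vars,\dg}$ proved for \ref{thm:Dim}: it shows that $m$ maximises $u_{\vars,\dg}$ over the divisors $e>1$ of $\dg$, that $u_{\vars,\dg}(\dg)>u_{\vars,\dg}(l)$ holds exactly in case \ref{special}, and that over the remaining divisors the next-largest value of $u_{\vars,\dg}$ is attained at an extreme remaining divisor (the second-smallest divisor of $\dg$, or $\dg$, or $l$). Summing the displayed two-sided bound over all $e$ then yields $\alpha_{\vars,\dg}-\#P^{=}_{1,m}\cdot\#D^{0}_{\vars,\dg/m}\le\#~im\gamma_{\vars,\dg,m}\le\#D_{\vars,\dg}\le\alpha_{\vars,\dg}+\sum_{e\ne m}\#~im\gamma_{\vars,\dg,e}$. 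The crucial point I would stress is that $\#D_{\vars,\dg}$ can overshoot $\alpha_{\vars,\dg}$ by at most $\sum_{e\ne m}\#~im\gamma_{\vars,\dg,e}$ and undershoot it by at most $\#P^{=}_{1,m}\cdot\#D^{0}_{\vars,\dg/m}$, and at most one of these two one-sided errors can be nonzero; hence $|\#D_{\vars,\dg}-\alpha_{\vars,\dg}|$ is bounded by the \emph{larger} of the two, not by their sum. Part \ref{thm:Count-1} then reduces to checking that this larger quantity is $\le\alpha_{\vars,\dg}\beta_{\vars,\dg}$, and the five cases of $\beta_{\vars,\dg}$ in \ref{defBeta} merely record which of the two deviations dominates and through which divisor $e\ne m$ it enters. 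I would control $\#D^{0}_{\vars,\dg/m}$ using \ref{invariant} (which transfers the fraction between $P^{=}$ and $P^{0}$), together with the bijection $\gamma_{\vars,\dg/l,\dg/l}\colon P^{0}_{1,\dg/l}\times P^{0}_{\vars,1}\to D^{0}_{\vars,\dg/l}$ when $\dg/l$ is prime; when $m=\dg$ this term is simply $0$ since then $\dg/m=1$ and $D^{0}_{\vars,1}=\varnothing$. Part \ref{thm:Count-2} is immediate: $\#I_{\vars,\dg}=\#P^{=}_{\vars,\dg}-\#D_{\vars,\dg}\ge\#P^{=}_{\vars,\dg}-\alpha_{\vars,\dg}(1+\beta_{\vars,\dg})\ge\#P^{=}_{\vars,\dg}-2\alpha_{\vars,\dg}$, using $\beta_{\vars,\dg}\le1$.

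For part \ref{thm:Count-3} I would rerun the whole argument with the index set $\{e:e\mid\dg,\ 1<e<\dg\}$ of \ref{substack}, which excludes the linear right component. If $\dg$ is prime this set is empty and $D^{~sl}_{\vars,\dg}=\varnothing$. Otherwise $u_{\vars,\dg}$ is maximised over the restricted set at $e=l$ --- automatic for $r\ge3$, and for $r=2$ a consequence of \ref{suli}, whose right-hand exponent reappears verbatim in $\beta^{~sl}_{\vars,\dg}$ --- so the main term becomes $\alpha^{~sl}_{\vars,\dg}$. The dominant deviation is now $\#~im\gamma_{\vars,\dg,\dg/l}$ when \ref{special} holds and $\dg>l^{2}$; it is the non-injectivity term $\#P^{=}_{1,l}\cdot\#D^{0}_{\vars,l}$ in the degenerate subcase $\dg=l^{2}$, where $e=l$ is the only admissible divisor and $D^{~sl}_{\vars,\dg}=~im\gamma_{\vars,\dg,l}$; and in all remaining cases it is exactly the deviation already estimated for \ref{thm:Count-1}. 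This yields \ref{thm:Count-3.1}, and \ref{thm:Count-4} follows from it just as \ref{thm:Count-2} followed from \ref{thm:Count-1}. I expect the genuine obstacle to be not the structure but the bookkeeping: one must verify, in each regime of \ref{defBeta} and of $\beta^{~sl}_{\vars,\dg}$, that the competing powers of $q$ --- the $1-q^{-\ast}$ correction factors included --- collapse to the clean stated bound, and it is exactly this collapse that pins down the awkward constants $l-3$, $l-2$, $\binom{\vars+1}{2}-2$, and $\binom{\vars-1+\dg/l}{\vars-1}-1$.
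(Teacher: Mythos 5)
Your overall architecture matches the paper's: bound $\#D_{\vars,\dg}$ above by summing $\#(P^{=}_{1,e}\times P^0_{\vars,\dg/e})$ over the divisors $e>1$, bound it below by restricting $\gamma_{\vars,\dg,m}$ to indecomposable right components where Fact~\ref{fact:uni} gives injectivity, and use the convexity of $u_{\vars,\dg}$ together with its arithmetic to localize the second-largest contribution. The paper organizes exactly the same steps into three consecutive stages (upper bound on decomposables, lower bound on indecomposables via $\beta\leq 1$, lower bound on decomposables); what you call bookkeeping is indeed the bulk of the actual proof, namely computing the gap $\delta=\min_{e\ne m}(u(m)-u(e))$ case by case and showing it dominates the stated constants $c_{\vars,\dg,i}$.

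Two small points merit attention. First, your observation that the actual signed error $\#D-\alpha$ is either an overshoot or an undershoot, so $|\#D-\alpha|\leq\max(\text{overshoot bound},\text{undershoot bound})$, is correct but does not save work: since one does not know in advance which sign the true error has, one must still verify separately that \emph{both} one-sided bounds are $\leq\alpha_{\vars,\dg}\beta_{\vars,\dg}$, which is what the paper does. Second, your control of the undershoot term $\#P^{=}_{1,m}\cdot\#D^0_{\vars,\dg/m}$ is only fully specified when $m=\dg$ (trivially zero) or $\dg/l$ is prime (where your bijection $P^0_{1,\dg/l}\times P^0_{\vars,1}\to D^0_{\vars,\dg/l}$ does work). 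In the general case~4 where $\dg/l$ may be composite, the paper's mechanism is to invoke part~\short\ref{thm:Count-2} with the \emph{smaller} degree parameter $\dg/m$ (available because stage~2 has already been established for all degrees $\geq 2$), giving $\#I^0_{\vars,\dg/m}\geq(1-2\alpha_{\vars,\dg/m}/\#P^{=}_{\vars,\dg/m})\#P^0_{\vars,\dg/m}$, and then reducing the claim to the inequality $2\alpha_{\vars,\dg/l}/\#P^{=}_{\vars,\dg/l}\leq\beta_{\vars,\dg}$ proved in~\ref{alphabeta}. You would need to add this ``apply the already-proven lower bound on indecomposables at degree $\dg/m$'' step to make the sketch complete.
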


\begin{proof}
  The proof of \short\ref{thm:Count-1} and \short\ref{thm:Count-2}
  proceeds in three stages: an upper bound on decomposables, a lower
  bound on indecomposables, and a lower bound on decomposables. Each
  stage depends on the previous one.

  According to \ref{defBeta}, we have to distinguish five cases:
  $$
  \begin{array}{c|l|c|c}
    i & \text{condition for case $i$} & m & c_{\vars,\dg,i} \\\hline
    0 & \text{$\dg$ prime} & \dg & \\\hline
    1 & \text{$\vars=2$, $\dg/l \leq 2l-5$ prime} & \dg & l-3 \\\hline
    2 & \text{$\vars=2$, $\dg/l=2l-3$ prime} & l & l-2 \\\hline
    3 & \text{$\dg=4$} & l & \displaystyle \binom{\vars+1}{2}-2\\\hline
    4 & \text{otherwise} & l & \displaystyle \binom{\vars-1+\dg/l}{\vars-1}-1 \\
  \end{array}
  $$

  In the first stage, for a divisor $e$ of $\dg$, we have
  \begin{align*}
    \# ~im \gamma_{\vars,\dg,e} & \leq \# P^{=}_{1,e} \cdot
    \#P^{0}_{\vars, \dg/e} = q^{b_{\vars,\dg/e}+e-1}
    (1-q^{-b_{\vars-1,\dg/e}}),
  \end{align*}
  and thus with $u_{\vars,\dg}$ from \ref{real}

  \begin{equation}\label{mon}
    \# D_{\vars,\dg} \leq \sum_{1< e\mid\dg}\# ~im \gamma_{\vars,\dg,e}
    \leq 
    \sum_{1< e\mid\dg}q^{u_{\vars,\dg}(e)}
    (1-q^{-b_{\vars-1,\dg/e}}).
  \end{equation}

  We write $u$ for $u_{\vars,\dg}$ and $c_{i}$ for $c_{\vars,\dg,i}$,
  and recall $E=\{e \in \mathbb{N}\colon 1 < e\mid\dg \}$.

  If $\dg$ is prime, then $E=\{\dg\}$, $m=l=\dg$ (see \ref{defM}), and
  each right hand component $h$ in a decomposition is linear, hence
  indecomposable. It follows from \ref{fact:uni} that
  $\gamma_{\vars,\dg,\dg}$ is injective, $D_{\vars,\dg}= ~im
  \gamma_{\vars,\dg,\dg}$, and $\# D_{\vars,\dg} =
  \alpha_{\vars,\dg}$. All claims follow in this case.

  In the first stage, we may use the following blanket assumptions and
  notations:
  \begin{equation}
    \label{blanket}
    \vars\geq 2, a=\dg/l \geq \sqrt \dg \geq l \geq 2, a^{2}\geq \dg\geq 2l \geq l+2.
  \end{equation}


  We first explain our general strategy for the upper bound
  \begin{equation}\label{strat}
    \# D_{\vars,\dg} \leq \alpha_{\vars,\dg} (1+\beta_{\vars,\dg})
  \end{equation}
  in \short\ref{thm:Count-1}. From \ref{maxU} we know that the maximal
  value of $u$ occurs at $e=m$.  By the convexity of $u$, each value
  is assumed at most twice, and we can majorize the sum in \ref{mon}
  by twice a geometric sum.  However, this would provide an
  unsatisfactory error estimate, and we want to show that the
  difference between $u(m)$ and the other values $u(e)$ with $e\in E$
  is sufficently large. We abbreviate
  $$
  w = \frac {1-q^{-b_{\vars-1,\dg/l}}} {1-q^{-b_{\vars-1,\dg/m}}} ,
  $$
  define $\delta$, $\mu$, and $\beta$ in \ref{method}, and claim that
  for any $c$ the following implication holds:
  \begin{equation}
    \left.\begin{array}{rcl}\label{method}
        c \leq \delta & = & \min_{e \in E\smallsetminus \{m\}} (u(m)-u(e))\\
        \mu & = & \min\{\#E -1, \frac{2}{1-q^{-1}}\} \\
        \beta & = & \mu w q^{-c}\\
      \end{array}\right\} \Rightarrow \# D_{\vars,\dg} \leq \alpha_{\vars,\dg}(1+\beta).
  \end{equation}
  In our four cases, $c$ will be instantiated by $c_{1}$, $c_{2}$, $c_{3}$,
  and $c_{4}$.  We note that $\mu\leq 4$. In order to prove the claim,
  we note that
  $$
  u(e)-u(m)\leq - c
  $$
  for all $e \in E \smallsetminus \{m\}$.  Since $b_{\vars-1,k}$ is
  monotonically increasing in $k$ and $\dg/e \leq \dg/l$, we have
  \begin{align*}
    1-q^{-b_{\vars-1,\dg/e}} \leq 1-q^{-b_{\vars-1, \dg/l}}
  \end{align*}
  for all $e \in E$. Using this estimate for all $e \neq m$ and the
  fact that the convex function $u$ takes any of its values at most twice, we
  find that
  \begin{align*}
    q^{-u(m)} \sum_{e\in E}q^{u(e)} {(1-q^{-b_{\vars-1,\dg/e}})} & <
    (1+2w \sum_{i \leq -c}q^{i}) \cdot {(1-q^{-b_{\vars-1,\dg/m}})}
    \\
    & = ( 1 + \frac{2w q^{-c}}{1-q^{-1}} ) \cdot
    {(1-q^{-b_{\vars-1,\dg/m}})}.
  \end{align*}
  Also, since $E\smallsetminus \{m\}$ has $\#E -1$ elements, we find
  \begin{align*}
    q^{-u(m)} \sum_{e\in E}q^{u(e)} {(1-q^{-b_{\vars-1,\dg/e}})} \leq
    ( 1 + {(\#E -1)w q^{-c}}) \cdot {(1-q^{-b_{\vars-1,\dg/m}})}.
  \end{align*}
  Using \ref{mon} we conclude that
  \begin{align}
    \#D_{\vars,\dg} \leq q^{u(m)}(1-q^{-b_{\vars-1,\dg/m}})\cdot (1+
    {\mu w q^{-c}}) = \alpha_{\vars,\dg}(1+\beta),
  \end{align}
  as claimed. It then remains to see that $\beta \leq
  \beta_{\vars,\dg}.$

  We now turn to our four cases. In case 1, \ref{special} holds, $E =
  \{l, \dg/l, \dg\}$, $r=2$, $l\geq 5$, $m=\dg$, and
  $$
  w = \frac{1-q^{-\dg/l-1}}{1-q^{-2}} .
  $$
  Now \ref{suli} says that
  $$
  u(l)-u(\dg/l) = \frac {(\dg-l^{2})(\dg+l^{2}+l)}{2l^{2}} \geq 0,
  $$
  so that $u(e) \leq u(l)$ for all $e \in E\smallsetminus \{m\}=\{l,\dg/l\}$, and
  by  \ref{ul-ud}
  \begin{equation*}
    \label{dlDiff}
    \delta =
    u(\dg)-u(l) = \frac 1 2 (\frac \dg l -1)(2l-4-\frac \dg l ) > 0. 
  \end{equation*}
  The two right hand factors are positive integers. If the second one
  equals 1, then
  $$
  \delta = \frac 1 2 (2l-5-1) = l-3 = c_{1}.
  $$
  Otherwise, $\delta \geq \dg/l-1 \geq l-1 > l-3=c_{1}$. Thus the
  assumptions in \ref{method} hold with $c=c_{1}$, and since $\# E
  \leq 3$, we have $\mu \leq 2$ and $\beta \leq 2wq^{-c} =
  \beta_{\vars,\dg}$. This shows \ref{strat} in case 1.

  In case 2, we have $E=\{l,2l-3,\dg\}, m=l$, and
  \begin{align*}
    &u(l)-u(\dg)= l-2,\\
    &u(l)-u(2l-3)=\frac{1}{2}(l-3)(3l-2).
  \end{align*}
  The minimum of these two values is $l-2$ when $l \geq 5$. Then $\delta =
  l-2=c_{2}$, and furthermore $\mu=2$ and $w=1$. This implies
  \ref{strat} in case 2, when $l \geq 5$. For $l=3$, we have $\dg =
  9$, $E=\{3,9\}$, $u(3)=12$, $u(9)=11$, $\delta = 1 = l-2 =
  c_{2}$, $\mu = 1$, and $w=1$. Thus $\beta=q^{-c_{2}}<\beta
  _{r,\dg}$, and \ref{strat} again holds.

  In case 3, we have $E=\{2,4\}$, $l=m=2$, $w=\mu=1$,
  $$
  \delta= u(2)-u(4)=\binom{\vars+1}{2}-2 = c_{3} \geq 1,
  $$
  and \ref{strat} holds.

  In case 4, we have $m=l < \dg$, and introduce $l^{*}=\dg
  l/(\dg-l)\in \mathbb{Q}$. ($l^{*}$ is not an integer unless $\dg$
  is 4 or 6.)  We first claim that
  \begin{align}
    \label{l0vsd}
    u(\dg) \leq u(l^{*}).
  \end{align}

  We start with the subcase $\vars\geq 3 $ and have to show that
  \begin{equation}
    \label{case2n2}
    \binom{\vars+a-1} \vars + \frac \dg {a-1} - 1 
    = u(l^{*}) \geq u(\dg) = \vars+\dg.
  \end{equation}
  We first treat the subcase $a\geq 5$. Then $a^{3} \geq 3a^{2} +
  4a +12$, so that the first inequality in
 \begin{equation} 
 \begin{aligned}
   \label{case2n2new}
   \frac 1 {a-1} \binom{\vars+a-2} {a-2} = \frac 1 {\vars+a-1} \binom{\vars+a-1}{\vars} \\
   \geq 1 + \frac {a^{2}} {\vars+a-1} \geq 1 + \frac \dg {\vars+a-1}
  \end{aligned}
\end{equation}
  is valid for $\vars=3$, and for all $\vars\geq 3$ since the left
  hand side is monotonically increasing and the right hand side
  decreasing in $\vars$. Using \ref{blanket}, this yields
  \ref{case2n2}.

  In the remaining subcase $\vars \geq 3$ and $a\leq 4$, we have $\dg
  \in \{4,6,8,9\}$. Case 3 covers $\dg=4$.  The inequality between the
  outer terms in \ref{case2n2new} holds for the following values of
  $(\vars,\dg)$: $(4,6)$, $(3,8)$, and $(4,9)$, and by monotonicity
  for these values of $\dg$ and any larger $\vars$. One checks \ref{case2n2}
  for $(3,6)$ and $(3,9)$.

  We next have the subcase $\vars=2$ and $a\geq 3$. Then
  \begin{align}\label{eq:subcase}
  u(\dg) - u(l^{*})& = \frac {a-2} {2a-2} \cdot (2\dg-a^{2}-2a+3),\\\nonumber
  u(\dg)>u(l^{*})&\Longleftrightarrow  2 a l = 2\dg > a^{2} +2a-3 \\\nonumber
&\Longleftrightarrow 2l > a + 2 - \frac
    3 a \Longleftrightarrow 2l \geq a + 2 \Longleftrightarrow 2 l-2
    \geq a .\nonumber
  \end{align}
  By assumption, \ref{special} does not hold, and if \ref{eq:subcase}
  is positive, then $2l-4 \leq a \leq
  2l-2$ follows. If $a$ is even, then $l =2$, and one finds that
  $\dg=4$, which is case 3. So the only remaining possibility is
  $a=2l-3$. Since each prime divisor of $a$ is at least $l$, $a$ is
  prime. But this is case 2, and therefore  \ref{l0vsd} holds.

  For the remaining possibility $a=2$, we find $l=2$ and $\dg=4$,
  which has been dealt with.
  We conclude that \ref{l0vsd} always holds in
  case 4.

  We have
  $$
  l^{2}+2l < 2\dg
  $$
  for all $\dg \neq 4$, since this follows from $\dg \geq l^{2}$ when $
  l\geq 3$, and also for $l=2$. This implies that
  $$
  l^{*}-l = \frac{l}{\dg/l-1}<2.
  $$
  For any $e \in E \smallsetminus \{l\}$, we have $l < e \leq \dg$ and
  $\dg/e < \dg/l$. These values are both integers, so that
  $$
  \frac{\dg}{e} \leq \frac{\dg}{l}-1= \frac{\dg}{l^{*}}.
  $$
  Thus $l^{*} \leq e \leq \dg$ for all $e \in E \smallsetminus
  \{l\}$. By \ref{l0vsd} and the convexity of $u$, the maximal value of
  $u(e)$ for these $e$ is at most $\max \{u(l^{*}), u(\dg)\}
  =u(l^{*})$. In \ref{method} we have

  \begin{align*}
    \delta & \geq
    u(l)-u(l^{*})=\binom{\vars+\dg/l}{\vars}-\binom{\vars-1+\dg/l}{\vars}+
l-l^{*} \\
    & = \binom{\vars-1+\dg/l}{\vars-1}+l -l^{*}>c_{4}+1-2=c_{4}-1.
  \end{align*}
  Since $\delta$ and $c_{4}$ are integers, we also have $ \delta \geq
  c_{4}$. Furthermore, we have $w=1$ and $\mu \leq 2(1-q^{-1})^{-1}$,
  so that $\beta \leq \beta_{\vars, \dg}$. Then the assumptions in
  \ref{method} hold with $c=c_{4}$, and \ref{strat} follows.

  In the next stage, we derive the lower bound in
  \short\ref{thm:Count-2} on the number $\# I_{\vars,\dg}$ of
  indecomposable polynomials.  The previous results yield
  \begin{align*}
    \# P_{\vars,\dg}^{=} - \# I_{\vars,\dg} = \#D_{\vars,\dg} \leq
    \alpha_{\vars,\dg} (1+\beta_{\vars,\dg}).
  \end{align*}
  The claim in \short\ref{thm:Count-2} is that the last expression is
  at most $2 \alpha_{\vars,\dg}$, that is, $\beta_{\vars,\dg}\leq 1$.
  Again, we distinguish according to our four cases.

  For case 1, we have $l \geq 5$ and $(1-q^{-2})^{-1} \leq 4/3$, and
  thus $\beta_{\vars,\dg} < \frac 8 3 q^{-l+3} \leq \frac 8 3 \cdot
  2^{-2} < 1$.

  In case 2, we have $l \geq 3$ and
  $$
  \beta_{\vars,\dg} = 2q^{-l+2} \leq q^{-l+3} \leq 1.
  $$

  In case 3, we have $c_{3}=\binom{\vars+1}{2}-2 \geq 1 >0$ and
  $\beta_{\vars,4}=q^{-c_{3}} < 1.$

  In case 4,
  we have $\beta_{\vars,\dg} \leq 4q^{-c_{4}} \leq q^{2-c_{4}}$, so
  that it is sufficient to show that $ c_{4} \geq 2$. 
  We have $\vars, a \geq 2$ and
  $$
  c_{4} + 1= \binom{\vars-1+a} { \vars-1} \geq \binom{\vars+1}
  {\vars-1} = \frac {\vars\cdot (\vars+1)} 2 \geq 3.
  $$
   This concludes the proof of \short\ref{thm:Count-2}.

  In the last stage, we estimate the number of decomposable
  polynomials from below. The idea is obvious: we take the largest
  type of decomposable polynomials, as identified above, and then use
  only indecomposable polynomials as right components, so that the
  uniqueness property of \ref{fact:uni} applies.  We set
  $$
  I_{\vars,\dg}^{0} = I_{\vars,\dg} \cap P_{\vars,\dg}^{0}.
  $$
  By \ref{invariant} and \short\ref{thm:Count-2}, we have
  \begin{align*}
    \# I_{\vars,\dg}^{0}& = \# I_{\vars,\dg} \cdot \frac {\#
      P_{\vars,\dg}^{0}} {\# P_{\vars,\dg}^{=}} \geq \frac {(\#
      P_{\vars,\dg}^{=} -2\alpha_{\vars,\dg}) \cdot {\#
        P_{\vars,\dg}^{0}} }
    {\# P_{\vars,\dg}^{=}}\\
    & = (1-\frac {2\alpha_{\vars,\dg}} {\# P_{\vars,\dg}^{=}} ) \frac
    {q^{b_{\vars,\dg}-2} (1-q^{-b_{\vars-1,\dg}})} {1-q^{-1}}.
  \end{align*}
  Thus
  \begin{align*}
    \# D_{\vars,\dg} & \geq \# \gamma_{\vars,\dg,m} (P_{1,m}^{=}
    \times I_{\vars,\dg/m}^{0} )
    = \# (P_{1,m}^{=} \times I_{\vars,\dg/m}^{0} )\\
    & \geq q^{b_{\vars,\dg/m}+m-1} (1-\frac {2\alpha_{\vars,\dg/m}} {\#
      P_{\vars,\dg/m}^{=}}) (1-q^{b_{\vars-1,\dg/m}}) =
    \alpha_{\vars,\dg} \cdot (1-\frac {2\alpha_{\vars,\dg/m}} {\#
      P_{\vars,\dg/m}^{=}}).
  \end{align*}

  In the cases 2 and 3, $\dg/m$ is prime, $\beta_{r, \dg/m}=0$, and we
  could replace the factor $2$ in the last expression by $1$; however,
  we do not need this in the following. In order to prove the lower
  bound in \short\ref{thm:Count-1}, we proceed according to our four
  cases. In case 1, we have $\vars=2$, \ref{special}\ holds, $m=\dg$,
  and
  \begin{equation}\label{eq:lb}
  \#D_{\vars,\dg} \geq \# ~im \gamma_{\vars,\dg,\dg} = \#
  (P_{1,\dg}^{=} \times P_{\vars,1}^{0} ) = \alpha_{\vars,\dg}.
  \end{equation}

  For the remaining three cases, we have $m=l$ and claim that
  \begin{equation}
    \label{alphabeta}
    \frac {2 \alpha_{\vars,\dg/l}} {\# P_{\vars,\dg/l}^{=}} \leq \beta_{\vars,\dg},
  \end{equation}
  from which the lower bound follows:
  \begin{align*}
    \# D_{\vars,\dg} \geq \alpha_{\vars,\dg} \cdot (1- \frac {2
      \alpha_{\vars,\dg/l}} {\# P_{\vars,\dg/l}^{=}} ) \geq
    \alpha_{\vars,\dg} \cdot (1-\beta_{\vars,\dg}).
  \end{align*}

  We denote by $m^{*}$ the quantity defined in \ref{defM} for the
  argument $a = \dg/l$ instead of $\dg$ (and hence using the smallest prime
  divisor of $\dg/l$ instead of $l$), and set
  $d=a/m^{*}=\dg/lm^{*}$. Thus $m^{*}$ is either $a$ or its smallest
  prime divisor, $a =m^{*} d\geq 2d \geq 2$, and
  \begin{equation}
    \label{alpha*}
    \frac {2 \alpha_{\vars,a}} {\# P_{\vars,a}^{=}} 
    = \frac { 2q^{-c^{*}}
      (1-q^{-b_{\vars-1,d}}) } 
    {1-q^{-b_{\vars-1,a}}} \leq 2q^{-c^{*}},
  \end{equation}
  with
  \begin{equation*}
    \label{c*}
    c^{*} =
    \binom{\vars+a} {\vars}-\binom{\vars+d} {\vars}-m^{*}+1. 
  \end{equation*}

It is therefore sufficient for \ref{alphabeta} to show 
\begin{equation} \label{eq:2q}
2q^{-c^{*}} \leq \beta_{\vars, \dg}.
\end{equation}

In case 2, $m^{*}= a =\dg/l = 2l-3$ is prime, and
  \begin{align*}
    c^{*} & = (2l-1)(l-2) > l-2,\\
    2q^{-c^{*}} & < 2q^{-(l-2)}=\beta_{2,\dg},
  \end{align*}
  and \ref{eq:2q} is satisfied.

In case 3, we have $n=4$, $l=2$, $a=m^{*}=2$, $d=1$, $c^{*}=
\binom{r+1}{2}-1$, and thus
$$
2q^{-c^{*}} \leq q \cdot q^{-\binom{r+1}{2}+1}= \beta_{\vars,4}.
$$

  In case 4, we have
  $$
  \beta_{\vars,\dg} = \frac {2q^{-c_{4}}} {1-q^{-1}} > 2q^{-c_{4}},
  $$
  and it is sufficient for \ref{eq:2q} to show that
  \begin{equation}
    \label{c*vsci}
    c^{*} \geq c_{4},
  \end{equation}
  which in turn amounts to showing that
  \begin{align}
\begin{aligned}
    \label{c*vsc3}
    \binom{r-1+a}{r} = \binom{\vars+a} {\vars} 
    &- \binom{\vars-1+a} {\vars-1} 
    \geq
    \binom{\vars+d} {\vars} 
    + m^{*}-2,
\end{aligned}
\end{align}  
using Pascal's identity. We prove this by induction on $r \geq 2$. For
$r=2$, we use $a=m^{*} d \geq m^{*} \geq 2$. Thus
$$
a^{2}+a-(\frac{a}{m^{*}})^{2}-3\frac{a}{m^{*}}=\frac{a}{(m^{*})^{2}}\bigl(a((m^{*})^{2}-1)+(m^{*})^{2}-3m^{*}\bigr) \geq 2m^{*}-2,
$$
since the inequality holds for $a=m^{*}$ and the middle term is monotonically
increasing in $a$ for $m^{*} \geq 2$. It follows that
$$
a^{2}+a \geq (\frac{a}{m^{*}})^{2}+3 \frac{a}{m^{*}}+2m^{*}-2,
$$
which implies \ref{c*vsc3} for $r=2$.

For the induction step, we have $a-1 \geq a/2 \geq a/m^{*}=d$, and 
$$
\binom{r+a-1}{r}- \binom{r+d}{r} \geq
\binom{r-1+a-1}{r-1}-\binom{r-1+d}{r-1}\geq m^{*}-2,
$$
again by Pascal.

  This finishes the proof of \short\ref{thm:Count-1}, and it remains
  to prove \short\ref{thm:Count-3} and \short\ref{thm:Count-4}. We may
  assume $\dg$ to be composite. Since
  $D^{~sl}_{\vars,\dg} \subseteq D_{\vars,\dg}= D^{~sl}_{\vars,\dg}
  \cup ~im \gamma_{\vars,\dg,\dg}$, the upper bound on $\#D_{\vars,\dg}$ in
  \short\ref{thm:Count-1} also holds for $\#D^{~sl}_{\vars,\dg}$, and
  the lower bound does unless $m=\dg$. Thus \short\ref{thm:Count-3}
  and \short\ref{thm:Count-4} follow unless \ref{special} holds, which
  we now assume.

  Since $\dg/l \geq l$, we have $1-q^{-\dg/l-1} \geq
  1-q^{-l-1}$. Using \ref{suli}, we find
  \begin{align*}
    \#D^{~sl}_{2, \dg} & \leq \#(P^{=}_{1,l} \times P^{0}_{2,\dg/l})
    + \# (P^{=}_{1,\dg/l} \times P^{0}_{2,l})\\
    & = \alpha^{~sl}_{2,\dg}(1+q^{-(n+l^{2}+l)(n-l^{2})/2l^{2}}
    \frac{1-q^{-l-1}}
    {1-q^{-\dg/l-1}}) \leq \alpha^{~sl}_{2,\dg}(1+ \beta^{~sl}_{2,\dg}),\\
    \# D^{~sl}_{2,\dg} & \geq \#(P^{=}_{1,l} \times I^{0}_{2,\dg/l})\\
    & \geq \#P_{1,l}^{=} \cdot (\#P_{2,\dg/l}^{=} - 2 \alpha_{2,\dg/l})
    \cdot
    \frac{\#P_{2,\dg/l}^{0}}{\#P_{2,\dg/l}^{=}}\\
    & = \alpha_{2,\dg}^{~sl}(1-2q^{-(\dg+2l)(\dg-l)/2l^{2}}
    \frac{1-q^{-2}}
    {1-q^{-\dg/l-1}})\\
    & \geq \alpha_{2,\dg}^{~sl}(1-q^{-(\dg+2l)(\dg-l)/2l^{2}+1})\\
    & > \alpha^{~sl}_{2,\dg}(1- \beta^{~sl}_{2,\dg}).
  \end{align*}

 If $\dg=l^{2}$, then $D^{~sl}_{2,\dg}= ~im \gamma_{2,\dg,l}$ and
  \begin{align*}
    \# D^{~sl}_{2,\dg} & \leq \# (P^{=}_{1,l} \times P^{0}_{2,l}) =
    \alpha^{~sl}_{2,\dg},\\
    \#D^{~sl}_{2,\dg} & \geq \#(P^{=}_{1,l} \times I^{0}_{2,l}) \geq
    \alpha^{~sl}_{2,\dg}(1-\beta^{~sl}_{2,\dg}
    \frac{1-q^{-2}}{1-q^{-l-1}}) \geq
    \alpha^{~sl}_{2,\dg}(1-\beta^{~sl}_{2,\dg}).\qed
  \end{align*}
\end{proof}

\begin{remark}
  In the simple case where $\dg$ has exactly two prime factors and $r
  \geq 2$, it is easy to determine $\#D_{\vars,\dg}$ exactly.  For $\dg =
  l^{2}$,
$$
D_{\vars,\dg}= \gamma_{\vars, \dg,l}(P^{=}_{1,l} \times I^{0}_{\vars,l})
\cup \gamma_{\vars, \dg, \dg}(P^{=}_{1, \dg} \times I^{0}_{r,1})
$$ 
is a disjoint union. We have
\begin{align*}
\#D_{\vars,\dg} =
\begin{cases}
 \alpha_{\dg}+q^{(\dg+5l)/2}(1-q^{-l-1})-q^{2l+1}(1-q^{-r})& \text{ if
  \ref{special} holds},\\
 \alpha_{\dg}+q^{\dg+\vars}(1-q^{-\vars})(1-q^{2l-\dg-1})& \text{ otherwise}.
\end{cases}
\end{align*}
We set
\begin{align*}
\beta'_{\vars, \dg}=
\begin{cases}
  \displaystyle q^{(-\dg+5l-4)/2}
  \frac{1-q^{-l-1}}{1-q^{-2}} -q^{-\dg+2l-1} & \text{if }
  \ref{special} \text{ holds},\\
  \displaystyle q^{\dg+\vars+1-\binom{\vars+\dg/l}{\vars}-l}
  \frac{(1-q^{-\vars})(1-q^{2l-\dg-1})}{1-q^{-\binom{\vars-1+\dg/l}{\vars-1}}}&
  \text{otherwise}.
\end{cases}
\end{align*}
Then
$$
\#D_{\vars, \dg}= \alpha_{\vars, \dg}(1+ \beta'_{\vars, \dg}).
$$

This value is exact, in contrast to the estimates of
\ref{thm:Count}, and $\beta^{'}_{\vars, \dg}$ is often much smaller
than $\beta_{\vars, \dg}$. The drawback is that the values are more
complicated, and an attempt to generalize this approach to more
than two prime factors of $\dg$ does not seem to lead to manageable results.

If $n > l^{2}$ and $\dg/l$ is prime, then one finds similarly that

\begin{align*}
  \#D_{\vars,\dg} & =
  q^{b_{\vars,\dg/l}+l-1}(1-q^{-b_{\vars-1,\dg/l}})+q^{b_{\vars,l}+\dg/l-1}(1-q^{b_{\vars-1,l}})\\
  & \quad + q^{\dg+\vars}(1-q^{-\vars})(1-2q^{l+\dg/l-\dg-1}).
\end{align*}

Here it is not even transparent which of the summands is the dominating
one. However, using the case distinction of \ref{special}, one again
obtains a quantity $\beta^{'}_{\vars,\dg}$, so that $\#D_{\vars,\dg}=
\alpha_{\vars,\dg}(1+\beta^{'}_{\vars,\dg})$. The previous remarks
apply to this solution as well. 
\end{remark}

\cite{boddeb09} obtain an equivalent result, in a different language.
They also show that $\#I_{\vars,\dg}/ \#P_{\vars,\dg}^{=} \rightarrow
1$ as $\dg \rightarrow \infty$ 
(see \ref{thm:Count-2}), and some results similar to those of
\ref{thm:Count-1} when either $r=2$ or $\dg$ has at most two
prime factors. Their methods do not lead to
a unified formula as in \ref{thm:Count-1}, and the error
bounds are weaker than the present ones by factors of $O(\dg)$ or $O(q)$.

If $u_{2,\dg}(e)=u_{2,\dg}(e')$ never happened for distinct divisors
$e$, $e'\geq 2$ of $\dg$, we could save a factor of $2$ in
$\beta_{2,\dg}$. However, if we take two arbitrary positive integers
$k \geq 2$ and $m$, set $e = 2km^{2}+2m^{2}+3m$, $e'=ke$, and
$\dg=2mke$, then $e < e'$ and $u_{2, \dg}(e)= u_{2,\dg}(e')$. The
smallest such choice gives $\dg=36$, $e=9$, $e'=18$.

We can unify cases 2 and 4 in \ref{defBeta}, and the other cases fit
in trivially. We set
\begin{equation}
  \label{fit}
  \begin{aligned}
    c_{\vars,\dg,5}& = \frac 1 2 \binom{\vars-1+\dg/l}{\vars-1}
    -1,\\
    \beta_{\vars,\dg}^{*} & = \frac {2q^{-c_{\vars,\dg,5}}}
    {1-q^{-1}}.
  \end{aligned}
\end{equation}

\begin{corollary}\label{cor:triv}
  Let $D_{\vars,\dg}$ be the set of decomposable polynomials of degree
  $n\geq 2$ in $r \geq 2$ variables over $\mathbb{F}_{q}$, and
  $\alpha_{\vars, \dg}$ and $\beta_{\vars, \dg}^{*}$ as in \ref{eq:pm}
 and
  \ref{fit}, respectively. Then
  $$
  \left| \# D_{\vars,\dg} - \alpha_{\vars,\dg} \right| \leq
  \alpha_{\vars,\dg} \cdot \beta_{\vars,\dg}^{*}.
  $$
\end{corollary}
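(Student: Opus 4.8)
The plan is to deduce \ref{cor:triv} directly from \ref{thm:Count}\short\ref{thm:Count-1}, which already establishes $|\#D_{\vars,\dg}-\alpha_{\vars,\dg}|\le\alpha_{\vars,\dg}\beta_{\vars,\dg}$ with the five-case quantity $\beta_{\vars,\dg}$ of \ref{defBeta}. Since $\alpha_{\vars,\dg}>0$ and $\alpha_{\vars,\dg}$ is the very same quantity in both statements, it suffices to prove the single inequality $\beta_{\vars,\dg}\le\beta^{*}_{\vars,\dg}$ in each of those five cases, where $\beta^{*}_{\vars,\dg}$ and $c_{\vars,\dg,5}$ are as in \ref{fit}; multiplying by $\alpha_{\vars,\dg}$ then gives the claim. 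The case ``$\dg$ prime'' is immediate, since there $\beta_{\vars,\dg}=0<\beta^{*}_{\vars,\dg}$.

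For the other four cases I would first record two elementary facts about the $q$-rational prefactors, valid for every prime power $q\ge 2$: $1/(1-q^{-2})\le 1/(1-q^{-1})$, together with $0<1-q^{-\dg/l-1}<1$ and $2/(1-q^{-1})>1$. These reduce each required inequality to a comparison of the relevant exponent $c_{\vars,\dg,i}$ against $c_{\vars,\dg,5}=\tfrac12\binom{\vars-1+\dg/l}{\vars-1}-1$. In case~4 one has $c_{\vars,\dg,4}-c_{\vars,\dg,5}=\tfrac12\binom{\vars-1+\dg/l}{\vars-1}\ge\tfrac12>0$, and since both $\beta$'s have the same prefactor $2/(1-q^{-1})$ this gives $\beta_{\vars,\dg}\le\beta^{*}_{\vars,\dg}$. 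In case~2, where $\vars=2$ and $\dg/l=2l-3$, one computes $c_{\vars,\dg,5}=\tfrac12(1+\dg/l)-1=\tfrac12(\dg/l-1)=l-2=c_{\vars,\dg,2}$, so equality of exponents holds and the missing factor $2/(1-q^{-1})>1$ provides the slack. In case~3, where $\dg=4$ and $l=2$, one gets $c_{\vars,\dg,3}-c_{\vars,\dg,5}=\tfrac12\binom{\vars+1}{2}-1\ge\tfrac12>0$ for all $\vars\ge2$, which again suffices.

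The only place where the hypothesis of \ref{special} is genuinely needed --- and the only step with any subtlety --- is case~1, i.e.\ when \ref{special} holds: there $\vars=2$ and $m=\dg$, but $c_{\vars,\dg,5}$ still involves $\dg/l$, so $c_{\vars,\dg,5}=\tfrac12(\dg/l-1)$ and $c_{\vars,\dg,1}-c_{\vars,\dg,5}=(l-3)-\tfrac12(\dg/l-1)=\tfrac12(2l-5-\dg/l)$, which is $\ge 0$ precisely because \ref{special} demands $\dg/l\le 2l-5$. Here $\beta_{\vars,\dg}$ carries the factor $(1-q^{-\dg/l-1})/(1-q^{-2})$, which by the two elementary facts above is strictly smaller than $1/(1-q^{-1})$, making room for the absent factor of $2$; combining this with $c_{\vars,\dg,1}\ge c_{\vars,\dg,5}$ yields $\beta_{\vars,\dg}\le\beta^{*}_{\vars,\dg}$. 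With all five cases checked we obtain $|\#D_{\vars,\dg}-\alpha_{\vars,\dg}|\le\alpha_{\vars,\dg}\beta_{\vars,\dg}\le\alpha_{\vars,\dg}\beta^{*}_{\vars,\dg}$. I do not expect a real obstacle: the whole argument is bookkeeping of binomial-coefficient exponents together with these two trivial inequalities between powers of $q$, and the only point worth stating carefully is the single sign computation $2l-5-\dg/l\ge0$ in case~1.
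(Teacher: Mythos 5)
Your proposal is correct and follows exactly the approach the paper takes (the paper's own proof is just the one line ``It is sufficient to show that $\beta_{\vars,\dg}\le\beta^*_{\vars,\dg}$ in all cases; this is an easy calculation''). Your case-by-case verification of $c_{\vars,\dg,i}\ge c_{\vars,\dg,5}$ and of the $q$-rational prefactors is right, including the two places that deserve care: the exact equality $c_{\vars,\dg,2}=c_{\vars,\dg,5}=l-2$ when $\dg/l=2l-3$, and the use of $\dg/l\le 2l-5$ in case~1 to get $c_{\vars,\dg,1}-c_{\vars,\dg,5}=\tfrac12(2l-5-\dg/l)\ge0$.
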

\begin{proof}
  It is sufficient to show that $ \beta_{\vars,\dg} \leq
  \beta^{*}_{\vars,\dg}$ in all cases. This is an easy calculation.
\end{proof}

How close is our relative error estimate $\beta_{\vars,\dg}$ to being
exponentially decaying in the input size? In the ``general'' case 4 of
\ref{defBeta}, $\beta_{\vars,\dg}$ is about $q^{-c_{4}}$ with $c_{4}$
approximately $b_{\vars-1, \dg/l}=
\binom{\vars-1+\dg/l}{\vars-1}$. \ref{fit} and \ref{cor:triv} relate
also the special cases to this.

The (usual) dense representation of a polynomial in $\vars$ variables
and of degree at most $\dg$ requires
$b_{\vars,\dg}=\binom{\vars+\dg}{\vars}$ monomials, each of them
equipped with a coefficient from $\mathbb{F}_{q}$, using about
$\log_{2}q$ bits. Thus the total input size is about $\log_{2}q \cdot
b_{\vars,\dg}$ bits. Now $\log_{2}q \cdot b_{\vars,\dg/l}$ differs from
$\log_{2}\beta_{\vars,\dg}$ by a factor of $1+ \frac{\dg}{\vars l}$.
Furthermore, $\dg$ and $\dg/l$ are polynomially related, since $\dg >\dg/l
\geq \sqrt{\dg}$. Up to these polynomial
differences (in the exponent), $\beta_{\vars, \dg}$ is exponentially decaying in the
input size. Furthermore $\beta_{\vars,\dg}$ is exponentially decaying
in any of the parameters $\vars$, $\dg$ and $\log_{2}q$, when the
other two are fixed.

We compare our results to those of \cite{gat08-incl-gat07} on the number
$\#R_{\dg}$ of reducible and $\#E_{\dg}$ of relatively irreducible
(irreducible and not absolutely irreducible) bivariate
polynomials. Ignoring small factors and special cases like
\ref{special}, we have for composite $\dg$
\begin{align*}
\#R_{\dg} & \approx q^{\binom{\dg+2}{2}-\dg+1}\\
\#E_{\dg} & \approx q^{\binom{\dg+2}{2}- \frac{\dg^{2}(l-1)}{2l}}\\
\#D_{2,\dg} & \approx q^{\binom{\dg/l+2}{2}+l-1}.
\end{align*} 

The first exponent is always greater than the third one, and for the
second and third ones we have
$$
\binom{n+2}{2}- \frac{\dg^{2}(l-1)}{2l}-\binom{\dg/l+2}{2}-l+1
= \frac{(l-1)(\dg^{2}+3\dg l-2l^{2})}{2l^{2}}>0.
$$
In other words, there are many more reducible or relatively
irreducible bivariate polynomials than decomposable ones, as one would expect.

\section{Acknowledgements}

I appreciate the interesting discussions with Arnaud Bodin, Pierre
Dèbes, and Salah Najib about the topic, and in particular the
challenges that the preliminary version of \cite{boddeb09} posed.

A first version of this paper is in \cite{gat08b}. The work was supported
by the B-IT Foundation and the Land Nordrhein-Westfalen.


\bibliographystyle{cc2} \bibliography{journals,refs,lncs}
\end{document}